%
\documentclass[draft|showlink]{siamart1116}

\usepackage{amsfonts}
\usepackage{graphicx}
\usepackage{epstopdf}
\usepackage{algorithmic}
\ifpdf
\DeclareGraphicsExtensions{.eps,.pdf,.png,.jpg}
\else
\DeclareGraphicsExtensions{.eps}
\fi

\newtheorem{assumption}{Assumption}

\newcommand{\TheTitleABB}{A numerical scheme for high-dimensional BSDES based on modified}
\newcommand{\TheTitle}{A numerical scheme for high-dimensional backward stochastic differential equation based on modified multi-level Picard iteration}
\newcommand{\TheAuthors}{Chol-Kyu Pak, Mun-Chol Kim and Hun O}

\def\to{\rightarrow}
\def\beq{\begin{equation}} \def\enq{\end{equation}}
\def\beseq{\begin{subequations}}  \def\enseq{\end{subequations}}
\def\beqa{\begin{eqnarray}} \def\enqa{\end{eqnarray}}

\def\BeDef{\begin{definition}} \def\EnDef{\end{definition}}
\def\BeThe{\begin{theorem}} \def\EnThe{\end{theorem}}
\def\BeLem{\begin{lem}} \def\EnLem{\end{lem}}

\numberwithin{theorem}{section}
\numberwithin{equation}{section}
\numberwithin{table}{section}
\numberwithin{figure}{section}

\headers{\TheTitleABB}{\TheAuthors}

\title{{\TheTitle}}

\author{
	Chol-Kyu Pak\thanks{Faculty of Mathematics, Kim Il Sung University, Pyongyang, Democratic People's Republic of Korea (\email{pck2016217@gmail.com}).}
	\and
	Mun-Chol Kim
	\and
	Hun O
}

\usepackage{amsopn}


\begin{document}
	
	\maketitle
	
	\begin{abstract}
		In this paper we propose a new kind of numerical scheme for high-dimensional backward stochastic differential equations based on modified multi$-$level Picard iteration. The proposed scheme is very similar to the original multi-level Picard iteration but it differs on underlying Monte$-$Carlo sample generation and enables an improvement in the sense of complexity.\\
		We prove the explicit error estimates for the case where the generator does not depend on control variate.\\
		\textbf{Keywords}  backward stochastic differential equations; numerical scheme; curse-of-dimensionality; error estimates
	\end{abstract}
	
	\section{Introduction}
	
	Let $\left(\Omega, \mathcal{F}, P\right)$ be a probability space, $T>0$ a finite time and $\left\{\mathcal{F}_t \right\}_{0\leq t\leq T}$ a filtration satisfying the usual conditions. Let $\left(\Omega, \mathcal{F}, P,\left\{\mathcal{F}_t \right\}_{0\leq t\leq T}\right)$ be a complete, filtered probability space on which a standard $d$-dimensional Brownian motion $W_t=\left(W_t^1,W_t^2,\ldots,W_t^d \right)^T$ is defined and $\mathcal{F}_0$ contains all the $P-$null sets of $\mathcal{F}$. Let $L^2=L_{\mathcal{F}}^2 (0,T)$ be the set of all $\{\mathcal{F}_t\}-$adapted mean$-$square$-$integrable processes.\\
	We consider the backward stochastic differential equation (BSDE)\\
	
	\begin{eqnarray}\label{1.1}
	y_t=\xi+\int\limits_{t}^T f(s, y_s,z_s)ds-\int\limits_{t}^T z_sdWs, \, t\in [0,T]
	\end{eqnarray}
		
	\noindent
	where the generator $f=f(t,y,z)$ is a vector function valued in $\mathbb{R}^m$  and is $\mathcal{F}_t-$adapted for each $(y,z)$ and the terminal variable $\xi \in L^2$ is $\mathcal{F}_T-$  measurable.\\
	A process $(y_t,z_t ):[0,T]\times \Omega \to \mathbb{R}^m\times \mathbb{R}^{m\times d}$ is called an $L^2-$solution of the BSDE (\ref{1.1}) if it is $\{\mathcal{F}_t\}-$adapted, square integrable, and satisfies the equation.	
	In 1990, Pardoux and Peng first proved in \cite{6} the existence and uniqueness of the solution of general nonlinear BSDEs and afterwards there has been very active research in this field with many applications.(\cite{5,7,9,10} )\\
	In this paper we assume that the terminal condition is a function of $W_T$, i.e. $\xi=\varphi(W_T)$ and the BSDE (\ref{1.1}) has a unique solution $(y_t,z_t )$.\\
	It was shown in \cite{7} that the solution $(y_t,z_t )$ of (\ref{1.1}) can be represented as
	
	\begin{eqnarray}\label{1.2}
	y_t=u(t,W_t), \, z_t=\nabla_x u(t,W_t ), \,  \forall t\in[0,T)  	\end{eqnarray}
	where $u(t,x)$  is the solution of the following parabolic partial differential equation
	
	\begin{eqnarray}\label{1.3}
	\dfrac{\partial u}{\partial t}+\dfrac{1}{2}\sum_{i=1}^d \dfrac{\partial^2 u}{\partial x_i^2}+f(t,u, \nabla_x u)=0 \end{eqnarray}
	
	\noindent
	with the terminal condition $u(T,x)=\varphi(x)$, and $\nabla_x u$ is the gradient of $u$ with respect to the spacial variable $x$. The smoothness of $u$ depends on $f$ and $\varphi$.\\
	
	Furthermore, the celebrated Feynman-Kac formula holds as follows.\\
	
	\begin{eqnarray}
	\nonumber u(t,x)&=&E\left[\varphi\left(x+W_{T-t}\right)\right]\\
	&&+\int\limits_{t}^TE\left[f(s, u(s,x+W_{s-t}),\nabla u(s,x+W_{s-t}))\right]ds \label{1.4}\end{eqnarray}

\noindent	
	Moreover, regarding to the gradient of solution,  Bismut-Elworthy-Li formula (\cite{4}) holds as follows.
	
	\begin{eqnarray}
	\nonumber \dfrac{\partial u}{\partial x_k}(t,x)&=&E\left[\varphi\left(x+W_{T-t}\dfrac{W_{T-t}^k}{T-t}\right)\right]\\
	&&+\int\limits_{t}^TE\left[f(s, u(s,x+W_{s-t}),\nabla u(s,x+W_{s-t}))\dfrac{W_{s-t}^k}{s-t}\right]ds \label{1.5}\end{eqnarray}
	
	Although BSDEs have very important applications in many fields such as mathematical finance and stochastic control, it is well known that it is difficult to obtain analytic solutions except some special cases and there have been many works on numerical methods to get approximate solution.\\
	In \cite{5} a numerical method by using binomial approach was proposed and in \cite{9,10} a $\theta$-scheme which is based on time-space grid is proposed. Besides, there are several kinds of numerical methods for BSDEs (see references in \cite{2}).\\
	All the algorithms mentioned above suffer from curse-of-dimensionality, i.e., the computational complexity grows exponentially in dimensionality. Most of the problems from real applications are high-dimensional and it has been very challenging to build a scheme of which complexity grows polynomially in dimension.\\
	\par Recently Martin et al. \cite{1,2} proposed such a scheme for the first time based on Monte-Carlo simulation and multi-level Picard iteration. Their multi-level approach is very effective and it shows applications in many important numeric fields. They proved the error estimates of the multi-level Picard approximation rigorously in \cite{3}.\\
	\par In this paper, we propose a scheme which is based on modified multi-level Picard iteration. The proposed scheme is very similar to the original multi-level scheme but makes use of different underlying Monte-Carlo samples and it enables a slight improvement in complexity. We note that the improvement just lies in constant scale (approximately $1.5\sim 2$ times). But we still believe the idea should be effective and applicable to other applications.\\
\par 	The rest of this paper is organized as follows. In Section 2 we introduce the scheme based on original multi-level Picard iteration and present a new scheme. In Section 3, we give error estimates of the proposed scheme theoretically under some assumptions for a limited case. In Section 4, some conclusions are given.
	\section{Multi-level Picard iteration and modification}
	First we introduce the multi-level Picard iteration method proposed in \cite{1}.
	
	\begin{itemize}
		\item[-] Initial step ($n=0$)\\
	\end{itemize}
	\begin{eqnarray}\label{2.1}
	y_0(t,x)=0, z_0(t,x)=0
	\end{eqnarray}
	\begin{itemize}	
		\item[-] Iteration step ($n\geq 1$)
	\end{itemize}
	\begin{align}\label{2.2}
	y_n(t,x)&=\dfrac{1}{M^n}\sum_{i=1}^{M^n} \varphi(t,x+W_{T-t}^{(n,i,0)})&
	\\ \nonumber+&\sum_{l=0}^{n-1}\sum_{j=1}^Q \dfrac{W_{n,j}}{M^{n-1}}\sum_{i=1}^{M^{n-l}}\left[f(y_l,z_l)-\chi(l\neq 0)f(y_{l-1},z_{l-1})(t_{n,j,x}+W_{t_{n,j}-t}^{(n,i,1)})\right]
	\\\label{2.3} z_n(t,x)&=\dfrac{1}{M^n}\sum_{i=1}^{M^n} \left(\varphi(t,x+W_{T-t}^{(n,i,0)})-\varphi(t,x)\right)\dfrac{W_{T-t}^{(n,i,0)}}{T-t}\\
	\nonumber +&\sum_{l=0}^{n-1}\sum_{j=1}^Q \dfrac{W_{n,j}}{M^{n-1}}\sum_{i=1}^{M^{n-l}}\left[f(y_l,z_l)-\chi(l\neq 0)f(y_{l-1},z_{l-1})(t_{n,j,x}+W_{t_{n,j}-t}^{(n,i,1)})\dfrac{W_{t_{n,j}-t}^{(n,i,1)}}{t_{n,j}-t}\right]
	\end{align}

	In the above scheme, $W_{s-t}^{(n,i)}$ denotes the $i$-th $d$-dimensional Gauss random variable that is used for Monte-Carlo approximation of the expectation at $n$-th iteration stage and $\left\{W_{s-t}^{(n,i)}\right\}$ are mutually independent.\\
	
	Gauss-Legendre quadrature with $Q-$points is used for the  approximation of the time integral. $(t_{n,1},\ldots,t_{n,Q} )\in [t,T]$ are the quadrature points and $w_{n,j}$ denotes the weight for the point $t_{n,j}$.\\
	
	We propose a modified multi-level Picard approximation scheme as follows.
	
	\begin{itemize}
		\item[-] Initial step ($n<2$)\\
	\end{itemize}
		\begin{align}\label{2.4}
		y_0(t,x)=0, z_0(t,x)=0\\
		\nonumber y_1(t,x)=\dfrac{1}{M}\sum_{i=1}^M\varphi\left(x+W_{T-t}^{(1,i)}\right)+\sum_{i=1}^Qw_{1,j}f\left(t_{1,j},0,0\right)\\
		 \nonumber z_1(t,x)=\dfrac{1}{M}\sum_{i=1}^M\left(\varphi\left(x+W_{T-t}^{(1,i)}\right)-\varphi(x)\right)\dfrac{W_{T-t}^{(1,i)}}{T-t}+\\
		+\sum_{i=1}^Q\dfrac{w_{1,j}}{M}\sum_{i=0}^M f\left(t_{1,j},0,0\right)\dfrac{W_{t_{1,j}-t}^{(1,i)}}{t_{1,j}-t} \label{2.5}
		\end{align}
		\begin{itemize}
		\item[-] Iteration step ($n\geq 2$)
		\end{itemize}
		\begin{eqnarray}
		\nonumber &&y_n(t,x)=\dfrac{1}{M}\sum_{i=1}^My_{n-1}^i(t,x)\\&& \nonumber +\sum_{j=1}^Q\dfrac{w_{n,j}}{M}\sum_{i=1}^M\left[\left(f\left(y_{n-1},z_{n-1}\right)-f(y_{n-2},z_{n-2})\right)\left(t_{n,j},x+W_{t_{n,j}-t}^{(n,i)}\right)\right]\\\label{2.6}
		\end{eqnarray}
		\begin{eqnarray}
		\nonumber && z_n(t,x)=\dfrac{1}{M}\sum_{i=1}^Mz_{n-1}^i(t,x)\dfrac{W_{T-t}^{(n,i)}}{T-t}\\
		\nonumber &&+\sum_{j=1}^Q\dfrac{w_{n,j}}{M}\sum_{i=1}^M\left[\left(f\left(y_{n-1},z_{n-1}\right)-f(y_{n-2},z_{n-2})\right)\left(t_{n,j},x+W_{t_{n,j}-t}^{(n,i)}\right)\dfrac{W_{t_{n,j}-t}^{(n,i)}}{t_{n,j}-t}\right]\\\label{2.7}
		\end{eqnarray}
	
	In the above scheme, $\{y_n^i (t,x)\}$ are the i.i.d. random variables that is identically distributed with $y_n (t,x)$.\\
	
	The proposed scheme (\ref{2.4})-(\ref{2.7}) is very similar to the original multi-level Picard iteration (\ref{2.1})-(\ref{2.3}).	
	Actually if we expand $y_n (t,x)$ one step further, we have the following.
	\begin{eqnarray*}
		y_n(t,x)&=&\dfrac{1}{M^2}\sum_{i=1}^{M^2}y_{n-2}^i(t,x)\\
		&&+\sum_{j=1}^Q\dfrac{w_{n,j}}{M}\sum_{i=1}^M\left[\left( f(y_{n-1},z_{n-1})-f(y_{n-2},z_{n-2})\right)\left(t_{n,j,x}+W_{t_{n,j}-t}^{(n,i)}\right)\right]\\
		&&+\sum_{j=1}^Q\dfrac{w_{n-1,j}}{M^2}\sum_{i=1}^{M^2}\left[\left( f(y_{n-2},z_{n-2})-f(y_{n-3},z_{n-3})\right)\left(t_{n-1,j,x}+W_{t_{n-1,j}-t}^{(n-1,i)}\right)\right]
	\end{eqnarray*}
	
	Likewise if we repeatedly expand we have a very similar one to (\ref{2.1})-(\ref{2.3}). But note that the proposed scheme makes use of Monte-Carlo samples of different level to calculate $y_n (t,x)$ while (\ref{2.2})-(\ref{2.3}) uses the ones of the same level. More importantly, we need to calculate $y_{n-1} (t,x)$ to get $y_n (t,x)$. So for the evaluation of $f(y_{n-2},z_{n-2})$ at $\left(t_{n,j},x+W_{t_{n,j}-t}^{(n,i)}\right)$ on the right hand of (\ref{2.6})-(\ref{2.7}) we can use the one that we used for evaluation of $f(y_{n-1},z_{n-1})$ at the same time-space point. This reduces the computational complexity slightly, to half roughly.\\
	\section{Error estimates}

	In this section, we prove the convergence of the proposed scheme for a limited case where the driver $f(s,y,z)$ does not depend on control variate. We also note that we just present the proof for $y$ component.\\
	
	First let us point out some results on Gauss-Legendre quadrature.\\
	
	Let $q\in \mathbb{N}$ , let $(C_j^q)^{j-1, q}\subset [-1,1]$ be the roots of  $q$-th order Legendre polynomial
	
	$$L^q(x)=\dfrac{1}{2^qq!}\dfrac{d^q}{dx^q}[(x^2-1)^q]$$
	.
	For the real function $g(s):[a,b]\to \mathbb{R}$, its integral $\int\limits_a^b g(t)dt$  can be approximated using $q$  points Gauss-Legendre quadrature as follows.\\
	
	\begin{eqnarray}\label{3.1}
	\int\limits_a^b g(t)dt-\sum_{j=1}^q w_jg(t_j)=\dfrac{\left[q!\right]^4(b-a)^{2q+1}}{(2q+1)\left[(2q)!\right]^3}g^{(2q)
	}(\xi), \, \xi\in[a,b]
	\end{eqnarray}
	
	where $(t_j)_{j=1\cdots q}$  are the quadrature points and $(w_j)_{j=1\cdots q}$  are their corresponding weights defined as follows.\\
	
	$$t_j=\dfrac{c_j^q(b-a)+(a+b)}{2}, \, w_j=\int\limits_a^b\left[\prod_{{i\in\{1,\ldots,q\},{t_i\neq t_j}}}\dfrac{2x-(b-a)c_i^q-(a+b)}{2t_j-(b-a)c_i^q-(a+b)}\right]dx$$
	
	For the simplicity, in the rest of the paper we denote the $q$-points Gauss-Legendre quadrature approximation of $\int\limits_a^bg(t)dt$ by $\int\limits_{[a,b],q}^{\sim}g(t)dt$ . Moreover as long as the number of quadrature points does not change, we simply write as  $\int\limits_{[a,b]}^{\sim}g(t)dt$.
	
	$$\int\limits_{[a,b],q}^{\sim}g(t)dt=\sum_{j=1}^q w_jg(t_j)$$
	
	The following proposition is a direct result from (\ref{3.1}) and it will be used in many places later.
	
	\begin{proposition}\label{prop3.1}
		For a sufficiently smooth real function $g:[a,b]\to \mathbb{R}$ which satisfies
		$$\forall n\in \mathbb{N}, \forall t\in [0,T], g^{(2n)}(t)\geq 0$$
		
		the following inequality holds

		$$\forall q \in \mathbb{N}, \int\limits_{[a,b],q}^{\sim}g(t)dt=\sum_{j=1}^q w_jg(t_j)\leq \int\limits_a^b g(t)dt,$$
		
		\noindent where $\int\limits_{[a,b],q}^{\sim}g(t)dt=\sum_{j=1}^q w_jg(t_j)$ denotes the  $q$-points Gauss-Legendre quadrature approximation of $\int\limits_a^b g(t)dt$.\\
	\end{proposition}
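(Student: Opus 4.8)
The statement is an immediate consequence of the exact error representation for Gauss--Legendre quadrature recalled in \eqref{3.1}, so the plan is simply to read off the sign of each factor in that formula. First I would apply \eqref{3.1} to $g$ on $[a,b]$ (this is legitimate since ``sufficiently smooth'' includes $g\in C^{2q}([a,b])$), obtaining a point $\xi\in[a,b]$ with
$$
\int\limits_a^b g(t)\,dt-\sum_{j=1}^q w_j g(t_j)=\dfrac{[q!]^4(b-a)^{2q+1}}{(2q+1)[(2q)!]^3}\,g^{(2q)}(\xi).
$$

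Next I would check that the right-hand side is nonnegative. The constant $\dfrac{[q!]^4(b-a)^{2q+1}}{(2q+1)[(2q)!]^3}$ is strictly positive because $q\in\mathbb{N}$ and $b>a$, so $[q!]^4>0$, $(b-a)^{2q+1}>0$, $(2q+1)>0$ and $[(2q)!]^3>0$. For the derivative factor I would invoke the hypothesis with the choice $n=q$: since $2q$ is even, $g^{(2q)}(t)\geq 0$ for every $t\in[a,b]$ (here I am implicitly using $[a,b]\subseteq[0,T]$, which is the situation in which the proposition is later applied), and in particular $g^{(2q)}(\xi)\geq 0$ at the point $\xi$ produced by \eqref{3.1}.

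Combining these two observations gives
$$
\int\limits_a^b g(t)\,dt-\sum_{j=1}^q w_j g(t_j)\geq 0,
$$
which is exactly the claimed inequality $\int\limits_{[a,b],q}^{\sim}g(t)\,dt=\sum_{j=1}^q w_j g(t_j)\leq \int\limits_a^b g(t)\,dt$, and since $q$ was arbitrary the ``$\forall q\in\mathbb{N}$'' quantifier is covered. I do not anticipate a genuine obstacle here; the only points requiring a word of care are purely bookkeeping ones, namely that the smoothness hypothesis is strong enough to license \eqref{3.1} for every $q$, and that the interval $[a,b]$ on which the quadrature is taken lies inside $[0,T]$ so that the sign hypothesis on the even derivatives is available at the mean-value point $\xi$. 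No positivity property of the weights $w_j$ themselves is needed, since \eqref{3.1} already packages the entire error into a single signed term.
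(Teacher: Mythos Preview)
Your proposal is correct and is exactly the argument the paper intends: the authors state only that Proposition~\ref{prop3.1} ``is a direct result from~(\ref{3.1})'', and your proof unpacks precisely this, reading off the sign of the error term from the positivity of the numerical coefficient and the hypothesis $g^{(2q)}\geq 0$.
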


	Now we make it clear the meaning of  $y_n(t,x), z_n(t,x)$.\\
	For $n=1$ , we have
	$$y_1(t,x)=\dfrac{1}{M}\sum_{i=1}^M \varphi\left(x+W_{T-t}^{(1,i)}\right)+\sum_{j=1}^Q w_{1,j}f(t_{1,j},0)$$
	
	\noindent and $y_1(t,x)$  is the random variable in the probability space $$\left(\Omega_1, \mathcal{F}_1, P_1\right)=\left(\Omega, \mathcal{F}, P\right).$$
	For $n=2$ , we have
	$$y_2(t,x)=\dfrac{1}{M}\sum_{i=1}^M y_1^i(t,x)+\sum_{j=1}^Q \dfrac{w_{2,j}}{M}\sum_{i=1}^M\left[f\left(t_{2,j},y_1\left(W_{t_{2,j}-t}^{(2,i)}\right)\right)-f(t_{2,j},0)\right]$$
	\noindent and the right hand includes $y_1(t_{2,j},x+W_{t_{2,j}-t}^{(2,i)})$ . So $y_2(t,x)$  is the random variable in the probability space$\left(\Omega_2, \mathcal{F}_2, P_2\right)=\left(\Omega\times \Omega, \mathcal{F}\otimes \mathcal{F}, P\times P\right)$ which depends on $W^{(2,i)}$ and $W^{(1,i)}$.\\
	
	Likewise for the  $n$-th iteration stage, $y_n(t,x)$  is the random variable in the probability space $\left(\Omega_n, \mathcal{F}_n, P_n\right)=\left(\underbrace{\Omega\times \Omega\times\cdots\times \Omega}_{n}, \underbrace{\mathcal{F}\otimes \mathcal{F}\otimes\cdots\otimes \mathcal{F}}_{n}, \underbrace{P\times P\times\cdot\times P}_{n}\right)$ . We denote the expectation, variance and $L_p$-norm in $\left(\Omega_n, \mathcal{F}_n, P_n\right)$   by $E^n [\cdot]$, $var^n [\cdot]$  and $\|\|_{n,p}$ respectively.
	
	The following propositions will be used in the error estimates
	
	\begin{proposition}\cite{8}\label{prop3.2} $$\forall n\in \mathbb{N}, \sqrt{2\pi n}\left(\dfrac{n}{e}\right)^n\leq n!\leq \sqrt{2\pi n}\left(\dfrac{n}{e}\right)^n e^{\frac{1}{12}}$$
	\end{proposition}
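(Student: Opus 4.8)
The plan is to establish the two-sided bound by the classical route: control the sequence
\[
d_n := \ln(n!) - \left(n+\frac12\right)\ln n + n = \ln\!\left(\frac{n!}{\sqrt{n}\,(n/e)^n}\right),
\]
show it converges, and identify its limit. First I would compute the increment
\[
d_n - d_{n+1} = \left(n+\frac12\right)\ln\!\left(\frac{n+1}{n}\right) - 1
\]
and expand the logarithm through the odd series $\ln\frac{n+1}{n} = 2\sum_{k=0}^{\infty}\frac{1}{(2k+1)(2n+1)^{2k+1}}$, which yields $d_n - d_{n+1} = \sum_{k=1}^{\infty}\frac{1}{(2k+1)(2n+1)^{2k}} > 0$, so $(d_n)$ is strictly decreasing. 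Bounding the same series above by a geometric one gives $d_n - d_{n+1} < \frac13\cdot\frac{1}{(2n+1)^2-1} = \frac{1}{12n(n+1)} = \frac{1}{12}\left(\frac1n-\frac1{n+1}\right)$, hence $\left(d_n - \frac{1}{12n}\right)$ is increasing.

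Since $(d_n)$ decreases, $\left(d_n-\frac1{12n}\right)$ increases, and their difference tends to $0$, both sequences converge to a common limit $C$, and therefore $C \le d_n \le C + \frac{1}{12n}$ for every $n \ge 1$. Writing $K := e^{C}$ and exponentiating, this reads
\[
K\sqrt{n}\,(n/e)^n \;\le\; n! \;\le\; K\sqrt{n}\,(n/e)^n\, e^{1/(12n)} \;\le\; K\sqrt{n}\,(n/e)^n\, e^{1/12},
\]
which is precisely the claimed inequality once the value of $K$ is pinned down.

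The remaining step --- and the only genuinely non-elementary one --- is to show $K = \sqrt{2\pi}$. Here I would bring in the Wallis product $\frac{\pi}{2} = \prod_{k=1}^{\infty}\frac{(2k)^2}{(2k-1)(2k+1)}$. Rewriting its $n$-th partial product in terms of factorials gives $\frac{2^{4n}(n!)^4}{(2n+1)\,((2n)!)^2}$; substituting the asymptotics $n! \sim K\sqrt{n}\,(n/e)^n$ and $(2n)! \sim K\sqrt{2n}\,(2n/e)^{2n}$ collapses this quantity to $\frac{K^2}{4}$ in the limit, so $\frac{K^2}{4} = \frac{\pi}{2}$, i.e.\ $K = \sqrt{2\pi}$. (Equivalently, one may feed in the Gaussian integral $\int_{-\infty}^{\infty} e^{-x^2/2}\,dx = \sqrt{2\pi}$ via a Laplace-method evaluation of $n! = \int_0^{\infty} t^n e^{-t}\,dt$.) I expect this constant-identification to be the main obstacle: the monotonicity and convergence part is routine bookkeeping with an elementary convergent series, whereas determining $K$ requires importing an independent identity. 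Since the statement is quoted from \cite{8}, the paper itself merely cites it; the outline above is how one would supply a self-contained proof.
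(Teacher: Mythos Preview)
Your argument is correct and is essentially Robbins' own proof from \cite{8}; the paper itself offers no proof and merely cites that reference, as you correctly anticipate in your closing sentence. In fact your derivation gives the sharper upper bound $n!\le\sqrt{2\pi n}\,(n/e)^n e^{1/(12n)}$, from which the stated $e^{1/12}$ bound is immediate for $n\ge 1$.
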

	
	\begin{proposition}\label{prop3.3}
		$$\forall n\in \mathbb{N}, k<n, \left(
		\begin{array}{c}
		n \\
		k
		\end{array}
		\right)<2^n
		$$
	\end{proposition}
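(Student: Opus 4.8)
\textbf{Proof proposal for Proposition \ref{prop3.3}.}
The plan is to deduce the bound directly from the binomial theorem rather than by induction, since that keeps the strictness transparent. First I would recall the identity $\sum_{j=0}^{n}\binom{n}{j}=(1+1)^n=2^n$, which holds for every $n\in\mathbb{N}$. Every summand $\binom{n}{j}$ is a nonnegative integer, and in fact $\binom{n}{0}=\binom{n}{n}=1$, so the sum contains at least the two explicitly positive terms corresponding to $j=0$ and $j=n$ (these coincide only when $n=0$, a case excluded since $k<n$ forces $n\ge 1$).

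Next, fix $k$ with $0\le k<n$. Then the index $k$ belongs to $\{0,1,\dots,n-1\}$, so the term $\binom{n}{k}$ appears in the partial sum $\sum_{j=0}^{n-1}\binom{n}{j}$, and since the omitted term $\binom{n}{n}=1$ is strictly positive we get
$$
\binom{n}{k}\le \sum_{j=0}^{n-1}\binom{n}{j}=2^n-\binom{n}{n}=2^n-1<2^n .
$$
This is exactly the claimed inequality, and it holds for all admissible $n$ and $k$ simultaneously.

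There is essentially no obstacle here; the only point requiring a moment's care is ensuring the inequality is \emph{strict}, which is why I isolate the $j=n$ term (legitimate precisely because the hypothesis $k<n$ guarantees $\binom{n}{k}$ is not that term). An alternative route would be induction on $n$ using Pascal's rule $\binom{n}{k}=\binom{n-1}{k-1}+\binom{n-1}{k}$ together with the trivial bound $\binom{m}{j}\le 2^m$, but the binomial-theorem argument above is shorter and yields the strict bound $2^n-1$ for free, so I would present that one.
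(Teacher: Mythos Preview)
Your argument is correct and complete: the binomial identity $\sum_{j=0}^{n}\binom{n}{j}=2^n$ together with the positivity of the omitted term $\binom{n}{n}=1$ immediately yields $\binom{n}{k}\le 2^n-1<2^n$ for any $k<n$. The paper itself states Proposition~\ref{prop3.3} without proof, so there is no approach to compare against; your proposal supplies a clean one-line justification that fits well.
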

	
	Now we address some necessary assumptions for the error estimates.
	\begin{assumption}\label{ass1}
		The generator $f(s,y)$  is Lipschitz continuous in $y$  and $f(s,0)$  is globally bounded.
		\begin{eqnarray*}
			\exists C_f>0, \forall y_1,y_2 \in \mathbb{R}, \forall s\in [0,T], |f(s,y_1)-f(s,y_2)|\leq C_f|y_1-y_2|\\
			\exists C_0>0, \forall s\in[0,T], |f(s,0)|\leq C_0
		\end{eqnarray*}
	\end{assumption}
	
	\begin{assumption}\label{ass2} The terminal condition $\varphi(x)$ is bounded.
		
		$$\exists C_{\varphi}>0, \forall x\in \mathbb{R}^d, \, |\varphi(x)|\leq C_{\varphi}$$
	\end{assumption}
	
	\begin{assumption}\label{ass3}
		The true solution $y(t,x)$ is bounded
		
		$$\exists C_{y}>0,\forall t\in [0, T], \forall x\in \mathbb{R}^d, \, |y(t,x)|\leq C_{y}$$
	\end{assumption}
	
	\begin{assumption}\label{ass4}
		For any $t\in [0,T], x\in \mathbb{R}^d$ , if we define two real functions as follows		
		$$F_t(s):=E\left[f\left(s, y\left(s,x+ W_{s-t}\right)\right)\right], \, G_t(s):=E\left[f\left(s, y\left(s,x+ W_{s-t}\right)\right)\dfrac{W_{s-t}}{s-t}\right] $$
		,
		then $F_t (s),G_t (s)$ are bounded, smooth enough and all of their derivatives are also bounded.
		
		$$\exists C_d>0, \forall t\in [0,T], x\in \mathbb{R}^d,  \sup_{s\in [t,T],k\in \mathbb{N}}F_t^{(k)}(s)\leq C_d, \, \sup_{s\in [t,T],k\in \mathbb{N}}G_t^{(k)}(s)\leq C_d $$
	\end{assumption}
	
	It is not difficult to check that the Assumption (\ref{ass4}) holds if the generator and the true solution are smooth enough, bounded and all of their derivatives are also bounded. (See \cite{10})
	
	Now we state the error estimates of the proposed scheme (\ref{2.4})-(\ref{2.7}).
	
	\begin{theorem}\label{the3.1}
		Let $(y(t, W_t),z(t,W_t))$   be the solution of the following backward stochastic differential equation.
		$$y_t=\xi+\int\limits_t^T f(s,y_s)ds-\int\limits_t^T z_sdWs, \, t\in [0,T]$$
		Let $(y_n,z_n)$  be the approximation series by the scheme (\ref{2.4})-(\ref{2.7}).
		Under the Assumptions (\ref{ass1})-(\ref{ass4}), the following estimate holds.		
		\begin{eqnarray*}
			\left|(y- E^n[y_n])(t,x)\right|&\leq& nC_2C_dQ^{\frac{1}{2}}\left(\dfrac{e}{8Q}\right)^{2Q}\\
			&+&\left(\dfrac{C_1}{\sqrt{M}}\right)^ne^{C_f\sqrt{M}(T-t)(1+1/C_1)}+\dfrac{C_y(T-t)^nC^n_f}{n!}
		\end{eqnarray*}
		
		$$\left(var^n[y_n(t,x)]\right)^{\frac{1}{2}}=\left(E^n\left[(y_n-E^ny_n)^2(t,x)\right]\right)^{\frac{1}{2}}\leq \left(\dfrac{C_1}{\sqrt{M}}\right)e^{C_f \sqrt{M}(T-t)}$$
		\noindent where $C_1,C_2$ are some constants that depend only on $T, C_f, C_y, C_0, C_d, C_{\varphi}$.
	\end{theorem}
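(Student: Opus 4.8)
The plan is to regard the modified iteration (\ref{2.4})--(\ref{2.7}) as a Monte--Carlo, Gauss--Legendre--quadrature discretisation of the Picard iteration attached to the Feynman--Kac representation (\ref{1.4}), and to split the total error into three pieces estimated by unrelated mechanisms: a deterministic Picard (fixed-point) error, a quadrature error, and a Monte--Carlo error that survives in expectation only because $f$ is nonlinear. Writing $v_k:=E^k[y_k]$ and $\psi_k(s,x):=E^k[f(s,y_k(s,x))]$, taking $E^n[\cdot]$ in (\ref{2.6}) and using that the fresh increments $W^{(n,i)}$ are independent of all lower-level iterates yields
\[
E^n[y_n(t,x)] = E^{n-1}[y_{n-1}(t,x)] + \sum_{j=1}^{Q} w_{n,j}\,E\big[(\psi_{n-1}-\psi_{n-2})(t_{n,j},x+W_{t_{n,j}-t})\big],
\]
which I would compare level by level with the relation $u_n=u_{n-1}+\int_t^T E[(f(s,u_{n-1}(s,x+W_{s-t}))-f(s,u_{n-2}(s,x+W_{s-t})))]\,ds$ satisfied by the exact Picard iterates $u_k$ of (\ref{1.4}) (with $u_0=0$, so that $u_1$ and $E^1[y_1]$ differ by a single quadrature error). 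The classical Picard estimate, started from $|y-u_0|=|y|\le C_y$ (Assumption \ref{ass3}), gives $|y(t,x)-u_n(t,x)|\le C_y(T-t)^nC_f^n/n!$, the third term of the theorem; it then remains to bound $v_n-u_n$.

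I would establish the variance bound first, by induction on $n$. For $n=1$ the quadrature part of (\ref{2.5}) is deterministic, so $\mathrm{var}^1[y_1(t,x)]=\frac1M\mathrm{var}[\varphi(x+W_{T-t})]\le C_\varphi^2/M$ by Assumption \ref{ass2}. For $n\ge2$, decompose $y_n(t,x)-v_n(t,x)$ into the contribution of the i.i.d.\ copies $\{y_{n-1}^i\}$, whose variance is exactly $\frac1M\mathrm{var}^{n-1}[y_{n-1}(t,x)]$, and the contribution of the quadrature/Monte--Carlo term; the latter I would bound by its second moment, applying Cauchy--Schwarz over the $Q$ nodes (with $\sum_j w_{n,j}=T-t$) and the Lipschitz property of $f$ (Assumption \ref{ass1}) to replace $f(y_{n-1})-f(y_{n-2})$ by $C_f|y_{n-1}-y_{n-2}|$, then estimating $\|y_{n-1}-y_{n-2}\|^2$ via the inductive hypothesis at the two previous levels and the already-controlled bias increments --- paying attention to which Monte--Carlo samples the two evaluations share, as discussed after (\ref{2.7}). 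This produces a linear recursion for $V_n:=\sup_{s\in[t,T],x}(\mathrm{var}^n[y_n(s,x)])^{1/2}$ of the schematic form $V_n\le\frac1{\sqrt M}\big(V_{n-1}+C_f(T-t)V_{n-1}+\cdots\big)$, which a discrete Gronwall / generating-function computation resolves into $(C_1/\sqrt M)\,e^{C_f\sqrt M(T-t)}$.

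For the remaining bias $v_n-u_n$ I would iterate the difference of the two recursions above. At each level one picks up (a) the $Q$-point Gauss--Legendre error of the current integrand $E[(f(u_{k-1})-f(u_{k-2}))(\,\cdot\,,x+W)]$, which by the remainder formula (\ref{3.1}), the derivative bound of Assumption \ref{ass4} (applied to the smooth iterates $u_{k-1},u_{k-2}$ with a level-independent constant), and Stirling's inequality (Proposition \ref{prop3.2}) --- rewriting $(Q!)^4/((2Q)!)^3$ as $C\,Q^{1/2}(e/8Q)^{2Q}$ --- is of size $O(C_dQ^{1/2}(e/8Q)^{2Q})$; summed over the $n$ levels this gives the first term $nC_2C_dQ^{1/2}(e/8Q)^{2Q}$, Proposition \ref{prop3.1} moreover furnishing a convenient one-sidedness; and (b) a nonlinearity gap $\psi_k-f(\,\cdot\,,v_k)$, of modulus at most $C_f(\mathrm{var}^k[y_k])^{1/2}$ by Jensen and the Lipschitz bound, hence controlled by the variance estimate just proved, together with a Lipschitz term $C_f|v_k-u_k|$ that closes the recursion. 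Bounding each quadrature sum by the corresponding integral plus its Gauss--Legendre remainder turns the recursion into a Picard-type contraction; iterating it and summing the geometric series carried by the variance contribution yields the Monte--Carlo term $(C_1/\sqrt M)^n e^{C_f\sqrt M(T-t)(1+1/C_1)}$, and adding the three contributions gives the stated estimate.

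The step I expect to be the genuine obstacle is the variance recursion, and specifically the bookkeeping over the nested probability spaces $(\Omega_k,\mathcal F_k,P_k)$ together with the precise dependence structure of the reused Monte--Carlo samples: the size of the covariances among the $M$ summands of the quadrature term --- i.e.\ whether that term enters at order $1/M$ or only at order $1$, and how fast $\|y_{k-1}-y_{k-2}\|$ decays in $k$ --- is exactly what fixes the power of $M$ and the exponential constants throughout the theorem. A secondary technical point, already flagged after Assumption \ref{ass4}, is to verify that the smoothness and derivative bounds of Assumption \ref{ass4} transfer from the true solution to each Picard iterate with a constant independent of the level, so that the accumulated quadrature error really is $O\!\big(nQ^{1/2}(e/8Q)^{2Q}\big)$.
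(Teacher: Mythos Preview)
Your overall strategy---splitting the error into a Picard contraction piece, a quadrature piece, and a Monte--Carlo piece---is reasonable, but your implementation differs from the paper's in one respect that turns the ``secondary technical point'' you flag at the end into the main obstacle.

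You introduce the exact Picard iterates $u_k$ of the Feynman--Kac map and compare $v_n=E^n[y_n]$ to $u_n$ level by level. The paper never does this. Instead it first telescopes the expectations: using the independence of the fresh increments and equation (\ref{3.3}), the sum $\sum_j w_{n,j}E^n[f(t_{n,j},y_{k})-f(t_{n,j},y_{k-1})]$ collapses across $k$ and one arrives at the closed form
\[
E^n[y_n(t,x)]=E[\varphi(x+W_{T-t})]+\sum_{j=1}^{Q}w_{n,j}\,E^n\!\bigl[f\bigl(t_{n,j},y_{n-1}(t_{n,j},x+W_{t_{n,j}-t}^{(n)})\bigr)\bigr],
\]
i.e.\ a \emph{single} $Q$-point quadrature applied once. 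The paper then subtracts the Feynman--Kac representation of $y$ and adds and subtracts the quadrature of $s\mapsto E[f(s,y(s,x+W_{s-t}))]$ itself. Consequently the Gauss--Legendre remainder (\ref{3.1}) is invoked \emph{only} for the integrand $F_t(s)=E[f(s,y(s,x+W_{s-t}))]$, which is exactly what Assumption~\ref{ass4} covers; the discrepancy at the nodes is handled by the Lipschitz bound $C_f E^n|y-y_{n-1}|$, giving the recursion $\mu_n\le \epsilon_n + C_f\tilde{\int}\mu_{n-1}+C_f\tilde{\int}v_{n-1}$. Iterating this is what produces the $n$ copies of the quadrature error. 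In your route, by contrast, the quadrature remainder at level $k$ involves the $2Q$-th $s$-derivative of $E[(f(u_{k-1})-f(u_{k-2}))(s,x+W_{s-t})]$, and Assumption~\ref{ass4} says nothing about the $u_k$'s; getting a level-independent constant there is not a bookkeeping detail but a separate regularity statement that the paper's decomposition simply avoids.

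On the variance side your sketch is close in spirit but the paper does not use ``already-controlled bias increments'' to bound $\|y_{n-1}-y_{n-2}\|$, which would make the variance estimate depend on the bias and risk circularity. Instead it sets up a \emph{closed} coupled system for $v_n=\sup_x(\mathrm{var}^n[y_n])^{1/2}$ and $\delta_n=\sup_x\|y_n-y_{n-1}\|_{n,2}$,
\[
v_n\le \tfrac{1}{\sqrt{M}}v_{n-1}+\tfrac{C_f}{\sqrt{M}}\tilde{\!\!\int}\delta_{n-1},\qquad
\delta_n\le\Bigl(1+\tfrac{1}{\sqrt{M}}\Bigr)v_{n-1}+C_f\tilde{\!\!\int}\delta_{n-1},
\]
the second line coming from writing $y_n-y_{n-1}=(\frac{1}{M}\sum_i y_{n-1}^i-E^{n-1}y_{n-1})+(E^{n-1}y_{n-1}-y_{n-1})+\text{(quadrature term)}$. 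Substituting the $\delta$-recursion into the $v$-recursion repeatedly and using Proposition~\ref{prop3.1} to dominate the iterated quadratures by iterated integrals yields an explicit bound that is then packaged into $(C_1/\sqrt{M})^n e^{C_f\sqrt{M}(T-t)}$ via Proposition~\ref{prop3.3}. Only after $v_n$ is pinned down does the paper return to the bias recursion. If you reorganise your argument along these two lines---telescoping before comparing, and decoupling the variance from the bias---your plan goes through essentially as the paper's does.
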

	\begin{proof}
		 For $n\geq 2$ from (3.2) it holds that
		\begin{align}
		\nonumber E^n[y_n(t,x)]&=E^n[y_{n-1}(t,x)]\\
		\nonumber &+\sum_{j=1}^Q w_{n,j}E^n[f(t_{n,j},y_{n-1}(t_{n,j},x+W_{t_{n,j}-t}^{(n)}))-f(t_{n,j},y_{n-2}(t_{n,j},x+W_{t_{n,j}-t}^{(n)}))]\\
		\nonumber &=E^{n-1}[y_{n-1}(t,x)]+\sum_{j=1}^Q w_{n,j}E^n[f(t_{n,j},y_{n-1}(t_{n,j},x+W_{t_{n,j}-t}^{(n)}))] \\ \nonumber &-\sum_{j=1}^Qw_{n,j}E^n[f(t_{n,j},y_{n-2}(t_{n,j},x+W_{t_{n,j}-t}^{(n)}))]\\
		\nonumber &=E^{n-1}[y_{n-1}(t,x)]+\sum_{j=1}^Q w_{n,j}E^n[f(t_{n,j},y_{n-1}(t_{n,j},x+W_{t_{n,j}-t}^{(n)}))] \\\nonumber &-\sum_{j=1}^Qw_{n,j}E^{n-1}[f(t_{n,j},y_{n-2}(t_{n,j},x+W_{t_{n,j}-t}^{(n-1)}))] \\\label{3.2}
		\end{align}
		At the last step of the above equality the following reasoning was applied
		\begin{eqnarray}
		\nonumber E^n[f((t_{n,j},y_{n-2}(t_{n,j},x+W_{t_{n,j}-t}^{(n)})))]&=&\int\limits_{\mathbb{R}} E^{n-1}[f(t_{n,j},y_{n-2}(t_{n,j},x+u))]p_{t_{n,j}-t}(u)du\\
		\nonumber &=&\int\limits_{\mathbb{R}} E^{n-2}[f(t_{n,j},y_{n-2}(t_{n,j},x+u))]p_{t_{n,j}-t}(u)du\\
		&=&E^{n-1}[f((t_{n,j},y_{n-2}(t_{n,j},x+W_{t_{n,j}-t}^{(n-1)})))] \label{3.3}
		\end{eqnarray}
		\noindent where $p_{t_{n,j}-t}(u)$ denotes the probability density function of Gaussian random variable $W_{t_{n,j}-t}^{(n)}$.
		$$p_{t_{n,j}-t}(u)=\dfrac{1}{\sqrt{2\pi (t_{n,j}-t)}}\exp\left(-\dfrac{u^2}{t_{n,j}-t}\right)$$
		Repeating the similar procedure, we deduce the following result for $n\geq 2$.
		\begin{align}\label{3.4}
 E^n[y_n(t,x)]&=E^1[y_1(t,x)]+\sum_{j=1}^Qw_{n,j}E^n[f(t_{n,j},y_{n-1}(t_{n,j},x+W_{t_{n,j}-t}^{(n)}))]-\sum_{j=1}^Q w_{1,j}f(t_{1,j},0)\\
		\nonumber &= E^1[\varphi(t,x+W_{T-t}^{(1)})]+\sum_{j=1}^Qw_{n,j}E^n[f(t_{n,j},y_{n-1}(t_{n,j},x+W_{t_{n,j}-t}^{(n)}))]\\\nonumber
		&=E[\varphi(t,x+W_{T-t}^{(1)})]+\sum_{j=1}^Qw_{n,j}E^n[f(t_{n,j},y_{n-1}(t_{n,j},x+W_{t_{n,j}-t}^{(n)}))] 
		\end{align}
		Note that (\ref{3.4}) holds for  $n=1$ from (\ref{2.5}). \\
		For all $n\in \mathbb{N}$ , from (\ref{3.4}) using triangle inequality and Assumption (\ref{ass1}) it holds that
		\begin{align}\label{3.5}
		|(y-E^n[y_n])(t,x)|&=|y(t,x)-E^n[y_n(t,x)]|\\
		\nonumber &\leq \left|\int\limits_{t}^TE[f(s,y(s,x+W_{s-t}))]ds-\sum_{j=1}^Q w_{n,j}E^n[f(t_{n,j},y_{n-1}(t_{n,j},x+W_{t_{n,j}-t}^{(n)}))]\right|\\
		\nonumber &\leq \left|\int\limits_{t}^TE[f(s,y(s,x+W_{s-t}))]ds-\sum_{j=1}^Q w_{n,j}E[f(t_{n,j},y(t_{n,j},x+W_{t_{n,j}-t}))]\right|\\
		\nonumber &+\sum_{j=1}^Q w_{n,j}E^n\left[\left| f(t_{n,j},y(t_{n,j},x+W_{t_{n,j}-t}^{(n)}))-f(t_{n,j},y_{n-1}(t_{n,j},x+W_{t_{n,j}-t}^{(n)}))\right|\right]\\
		\nonumber  &\leq \epsilon_{n}(t)+C_f\sum_{j=1}^Q w_{n,j}E^n\left[\left|y(t_{n,j},x+W_{t_{n,j}-t}^{(n)})-y_{n-1}(t_{n,j},x+W_{t_{n,j}-t}^{(n)})\right|\right]\\
		\nonumber &\leq \epsilon_{n}(t)+C_f\sum_{j=1}^Q w_{n,j}E^n\left[\left|(y-E^{(n-1)}[y_{n-1}])(t_{n,j},x+W_{t_{n,j}-t}^{(n)})\right|\right]\\
		\nonumber  &+C_f\sum_{j=1}^Q w_{n,j}E^n\left[\left|(y_{n-1}-E^{(n-1)}[y_{n-1}])(t_{n,j},x+W_{t_{n,j}-t}^{(n)})\right|\right]\\
		\nonumber &\leq \epsilon_{n}(t)+C_f\sum_{j=1}^Q w_{n,j}\int\limits_{\mathbb{R}}E^{n-1}\left[\left|(y-E^{(n-1)}[y_{n-1}])(t_{n,j},x+u)\right|\right]p_{t_{n,j}-t}(u)du\\
		\nonumber  &+C_f\sum_{j=1}^Q w_{n,j}\int\limits_{\mathbb{R}}E^{n-1}\left[\left|(y_{n-1}-E^{(n-1)}[y_{n-1}])(t_{n,j},x+u)\right|\right]p_{t_{n,j}-t}(u)du\\
		\nonumber
		\nonumber &\leq \epsilon_{n}(t)+C_f\sum_{j=1}^Q w_{n,j}\int\limits_{\mathbb{R}}E^{n-1}\left[\left|(y-E^{(n-1)}[y_{n-1}])(t_{n,j},x+u)\right|\right]p_{t_{n,j}-t}(u)du\\
		\nonumber  &+C_f\sum_{j=1}^Q w_{n,j}\int\limits_{\mathbb{R}}(var^{n-1}\left[y_{n-1}(t_{n,j},x+u)\right])^{\frac{1}{2}}p_{t_{n,j}-t}(u)du
		\end{align}
		\noindent where
		\begin{eqnarray}\label{3.6}
		\epsilon_n(t)=\dfrac{[Q!]^4(T-t)^{2Q+1}}{(2Q+1)[(2Q)!]^3}C_d.
		\end{eqnarray}
		From (\ref{3.1}) and Assumption(\ref{ass4}) we have
		\begin{align}\nonumber
		\left|{\int\limits_{t}^T E\left[f(s,y(s,x+W_{s-t}))\right]ds-\sum_{j=1}^Qw_{n,j}E\left[f(t_j, y(t_j,x+W_{t_j-t}))\right]}\right|
		&=\dfrac{[Q!]^4(T-t)^{2Q+1}}{(2Q+1)[(2Q)!]^3}\left|F^{2Q}(\xi)\right|
		\\\nonumber
		&\leq\dfrac{[Q!]^4(T-t)^{2Q+1}}{(2Q+1)[(2Q)!]^3}C_d
		=\epsilon_n(t)
		\end{align}
		Now let us evaluate the variance in each iteration step. From the independence of $(y_{n-1}^i )_{i=1,\ldots,M}$ and the independence of $(W_{t_{n,j}-t}^{n,i} )_{i=1,\ldots,M}$, using the Liptschtz property of $f$ we deduce that
		\begin{align}\label{[align=left]3.7}
		var^n[y_n(t,x)]&=\dfrac{1}{M}var^n[y_{n-1}(t,x)]\\
		\nonumber&+\sum_{j=1}^Q \dfrac{w^2_{n,j}}{M}var^n\left[f(t_{n,j},y_{n-1}(t_{n,j},x+W_{t_{n,j}-t}^{(n)}))-f(t_{n,j},y_{n-2}(t_{n,j},x+W_{t_{n,j}-t}^{(n)}))\right]\\
		\nonumber &\leq\dfrac{1}{M}var^n[y_{n-1}(t,x)]\\
		\nonumber&+\sum_{j=1}^Q \dfrac{w^2_{n,j}}{M}E^n\left[\left(f(t_{n,j},y_{n-1}(t_{n,j},x+W_{t_{n,j}-t}^{(n)}))-f(t_{n,j},y_{n-2}(t_{n,j},x+W_{t_{n,j}-t}^{(n)}))\right)^2\right]\\
		\nonumber&\leq \dfrac{1}{M}var^n[y_{n-1}(t,x)]+\sum_{j=1}^Q \dfrac{C_f^2w_{n,j}^2}{M}E^n[(y_{n-1}-y_{n-2})^2(t_{n,j},x+W_{t_{n,j}-t}^{(n)})]
		\end{align}
		
		Taking squared roots of the both side, it holds that
		\begin{align}\label{3.8} (var^n[y_n(t,x)])^{\frac{1}{2}}&\leq\dfrac{1}{\sqrt{M}}(var^{n-1}[y_n(t,x)])^{\frac{1}{2}}+\dfrac{C_f}{\sqrt{M}}\sum_{j=1}^Q w_{n,j}\left(E^n[(y_{n-1}-y_{n-2})^2(t_{n,j},x+W_{t_{n,j}-t}^{(n)})]\right)^{\frac{1}{2}} 
		\end{align}
		
		On the other hand, from (\ref{2.6}) it holds that

		\begin{align}\label{3.9}
		\left\|(y_n-y_{n-1})(t,x)\right\|_{n,2}&\leq \left\|(\dfrac{1}{M}\sum_{i=1}^M y_{n-1}^i-y_{n-1})(t,x)\right\|_{n,2}\\
		\nonumber &+\sum_{j=1}^Q w_{n,j}\left\|f(t_{n,j},y_{n-1}(t_{n,j},x+W_{t_{n,j}-t}^{(n)}))-f(t_{n,j},y_{n-2}(t_{n,j},x+W_{t_{n,j}-t}^{(n)}))\right\|_{n,2}\\
		\nonumber&\leq\left\|(\dfrac{1}{M}\sum_{i=1}^M y_{n-1}^i-y_{n-1})(t,x)\right\|_{n,2}+ \left\|(y_{n-1}-E^{n-1}y_{n-1})(t,x)\right\|_{n,2}\\
		\nonumber &+C_f\sum_{j=1}^Q w_{n,j}\left\|(y_{n-1}-y_{n-2})(t_{n,j},x+W_{t_{n,j}-t}^{(n)})\right\|_{n,2}\\
		\nonumber &\leq \left(1+\dfrac{1}{\sqrt{M}}\right)(var^{n-1}[y_{n-1}(t,x)])^{\frac{1}{2}}\\\nonumber
		&+C_f\sum_{j=1}^Q w_{n,j}\left\|(y_{n-1}-y_{n-2})(t_{n,j},x+W_{t_{n,j}-t}^{(n)})\right\|_{n,2}
		\end{align}
		
		Let us define 
		\[v_n(t):=\sup_{x\in \mathbb{R}^d}(var^n[y_n(t,x)])^{\frac{1}{2}}, \, \delta_n(t)=\sup_{x\in \mathbb{R}^d}\|(y_n-y_{n-1})(t,x)\|_{n,2}.\] 
		Then for any $n\geq 2$ , from (\ref{3.8}) and (\ref{3.9}) it holds that
		
		\begin{eqnarray}
		v_n(t)\leq \dfrac{1}{\sqrt{M}}v_{n-1}(t)+\dfrac{C_f}{\sqrt{M}}\sum_{j=1}^Q w_{n,j}\delta_{n-1}(t_{n,j}) \label{3.10}
		\end{eqnarray}
		
		\begin{eqnarray}
		\delta_n(t)\leq \left(1+\dfrac{1}{\sqrt{M}}\right)v_{n-1}(t)+C_f\sum_{j=1}^Q w_{n,j}\delta_{n-1}(t_{n,j}) \label{3.11}
		\end{eqnarray}
		
		For the initial step, from the Assumption (\ref{ass1}), Assumption (\ref{ass2}) and (\ref{2.1})-(\ref{2.2}) we deduce that
		
		\begin{eqnarray}
		v_0(t)=0, \,v_1(t)=\sup_{x\in \mathbb{R}^d}\dfrac{\sqrt{var \varphi (x+W_{T-t}^{(1)})}}{\sqrt{M}}\leq \dfrac{C_{\varphi}}{\sqrt{M}} \label{3.12}
		\end{eqnarray}
		
		\begin{eqnarray}
		\delta_1(t)=\sup_{x\in \mathbb{R}^d}\left\|\dfrac{1}{M}\sum_{i=1}^{M}\varphi(x+W_{T-t}^{(1,i)})+\sum_{j=1}^Q w_{1,j}f(t_{1,j},0)\right\|_{1,2}\leq C_{\varphi}+C_0T_1 \label{3.13}
		\end{eqnarray}
		
		Now let us define $a:=1+ \dfrac{1}{\sqrt{M}}$ and substituting (\ref{3.11}) into (\ref{3.10}) repeatedly it holds that
		
		\begin{eqnarray}
		\nonumber v_n(t)&\leq& \dfrac{1}{\sqrt{M}}v_{n-1}(t)+\dfrac{aC_f}{\sqrt{M}}\int\limits_{[t,T]}^{\sim}v_{n-2}(s_1)ds_1+\dfrac{aC_f^2}{\sqrt{M}}\int\limits_{[t,T]}^{\sim}\int\limits_{[s_1,T]}^{\sim}v_{n-3}(s_2)ds_2ds_1\\
		\nonumber \nonumber &&+\cdots+\dfrac{aC_f^{n-2}}{\sqrt{M}}\int\limits_{[t,T]}^{\sim}\int\limits_{[s_1,T]}^{\sim}\cdots \int\limits_{[s_{n-3},T]}^{\sim} v_{1}(s_{n-2})ds_{n-2}ds_{n-3}\cdots ds_1\\
		&&+\dfrac{C_f^{n-1}}{\sqrt{M}}\int\limits_{[t,T]}^{\sim}\int\limits_{[s_1,T]}^{\sim}\cdots \int\limits_{[s_{n-2},T]}^{\sim} \delta_{1}(s_{n-1})ds_{n-1}ds_{n-2}\cdots ds_1 \label{3.14}
		\end{eqnarray}
		
		Because it holds that $\dfrac{d^{2q}}{ds^{2q}}\left(\dfrac{(T-t)^k}{k!}\right)\geq 0$  for any $k,q \in \mathbb{N}$ , from (\ref{3.13}) and Proposition \ref{prop3.1} we deduce that
		
		\begin{eqnarray}
		\nonumber \dfrac{C_f^{n-1}}{\sqrt{M}}\int\limits_{[t,T]}^{\sim}\int\limits_{[s_1,T]}^{\sim}\cdots \int\limits_{[s_{n-2},T]}^{\sim} \delta_{1}(s_{n-1})ds_{n-1}ds_{n-2}\cdots ds_1\leq \dfrac{C_{\varphi}+C_0T}{\sqrt{M}}\cdot\dfrac{C_f^{n-1}(T-t)^{n-1}}{(n-1)!} \\ \label{3.15}
		\end{eqnarray}
		
		For $n\geq 2$, combining (\ref{3.14}) and (\ref{3.15}) it holds that
		
		\begin{eqnarray}
		\nonumber v_n(t)&\leq& \dfrac{a}{\sqrt{M}}\left[v_{n-1}(t)+C_f\int\limits_{[t,T]}^{\sim}v_{n-2}(s_1)ds_1+ C^2_f\int\limits_{[t,T]}^{\sim}\int\limits_{[s_1,T]}^{\sim}v_{n-3}(s_2)ds_2ds_1+\right.\\
		\nonumber &+&\cdots+ C_f^{n-2}\int\limits_{[t,T]}^{\sim}\int\limits_{[s_1,T]}^{\sim}\cdots \int\limits_{[s_{n-3},T]}^{\sim}v_1(s_{n-2})ds_{n-2}ds_{n-3}\cdots ds_{1}\\
		\nonumber &&\left.+(C_{\varphi}+C_0T)\dfrac{C_f^{n-1}(T-t)^{n-1}}{(n-1)!}\right]\\ \label{3.16}
		\end{eqnarray}
		
		Now we prove the following inequality by induction on $2\leq n\in \mathbb{N}$.\\
		
		\begin{align}\label{3.17}\\
		\nonumber v_n(t)\leq \left(\dfrac{a}{\sqrt{M}}\right)^{n-1}\left(\dfrac{C_{\varphi}}{\sqrt{M}}\sum_{k=0}^{n-2}\dfrac{\sqrt{M}^kC_f^k(T-t)^k}{a^kk!}\binom{n-2}{k}+\dfrac{C_{\varphi}+C_0T}{\sqrt{M}}\sum_{k=1}^{n-1}\dfrac{\sqrt{M}^kC_f^k(T-t)^k}{a^kk!}\binom{n-2}{k-1}\right) 
		\end{align}
		
		For the base case where $n=2$, from (\ref{3.10})-(\ref{3.13}) it holds that
		
		$$v_2(t)\leq \dfrac{a}{\sqrt{M}}\dfrac{C_{\varphi}}{\sqrt{M}}+\dfrac{C_f}{\sqrt{M}}(C_{\varphi}+C_0T)(T-t)$$

		Now let us assume that (\ref{3.17}) holds for $v_1(t), \ldots, v_{n-1}(t)$.\\
		
		Then, for any $1\leq l\leq n-1$ the following inequality holds. \\
		\begin{eqnarray*}
			&&C_f^l\int\limits_t^T\int\limits_{s_1}^T\cdots \int\limits_{s_{l-1}}^T v_{n-l-1}(s_l)ds_lds_{l-1}\cdots ds_1 \\
			&\leq& \left(\dfrac{a}{\sqrt{M}}\right)^{n-l-2}\left[\dfrac{C_{\varphi}}{\sqrt{M}}\sum_{k=0}^{n-l-3}\dfrac{\sqrt{M}^kC_f^{k+l}(T-t)^{k+l}}{a^k(k+l)!}\binom{n-l-3}{k}+\right.\\
			&+& \left.\dfrac{C_{\varphi}+C_0T}{\sqrt{M}}\sum_{k=1}^{n-l-2}\dfrac{\sqrt{M}^kC_f^{k+l}(T-t)^{k+l}}{a^k(k+l)!}\binom{n-l-3}{k-1}\right]\\
			&\leq& \left(\dfrac{a}{\sqrt{M}}\right)^{n-2}\left[\dfrac{C_{\varphi}}{\sqrt{M}}\sum_{k=0}^{n-l-3}\dfrac{\sqrt{M}^{k+l}C_f^{k+l}(T-t)^{k+l}}{a^{k+l}(k+l)!}\binom{n-l-3}{k}+\right.\\
			&+& \left.\dfrac{C_{\varphi}+C_0T}{\sqrt{M}}\sum_{k=1}^{n-l-2}\dfrac{\sqrt{M}^{k+l}C_f^{k+l}(T-t)^{k+l}}{a^{k+l}(k+1)!}\binom{n-l-3}{k-1}\right]
		\end{eqnarray*}
		
		If we apply this to each term on the right hand of (\ref{3.17}) then the coefficient of $\dfrac{C_{\varphi}}{\sqrt{M}}(T-t)^k$ from the first part is as follows.
		
		$$\binom{n-3}{k}+\binom{n-4}{k-1}+\cdots+\binom{n-2-k}{1}+\binom{n-3-k}{0}$$
		
		From Pascal's formula we deduce that
		\begin{eqnarray*}
			&&\binom{n-3}{k}+\binom{n-4}{k-1}+\cdots+\binom{n-2-k}{1}+\binom{n-3-k}{0}\\
			&=&\binom{n-3}{k}+\binom{n-4}{k-1}+\cdots+\binom{n-2-k}{1}+\binom{n-2-k}{0}\\
			&=&\binom{n-3}{k}+\binom{n-4}{k-1}+\cdots+\binom{n-1-k}{2}+\binom{n-1-k}{1}\\
			&=&\binom{n-2}{k}
		\end{eqnarray*}
		
		Likewise the coefficient of $\dfrac{C_{\varphi}+C_0T}{\sqrt{M}} (T-t)^k$ from the second part  becomes $\binom{n-2}{k-1}$  and (\ref{3.17}) holds for $n$.\\
		Now from (\ref{3.13}) we have $C_{\varphi}\leq C_{\varphi}+C_0T$ and it holds that		
		\begin{align}\label{3.18}
		v_n(t)&\leq \left(\dfrac{a}{\sqrt{M}}\right)^{n-1}\dfrac{C_{\varphi}+C_0T}{\sqrt{M}}\left(\sum_{k=0}^{n-2}\dfrac{\sqrt{M}^kC_f^k(T-t)^k}{a^kk!}\binom{n-2}{k}+\sum_{k=1}^{n-1}\dfrac{\sqrt{M}^kC_f^k(T-t)^k}{a^kk!}\binom{n-2}{k-1}\right) \\
		\nonumber &\leq \left(\dfrac{a}{\sqrt{M}}\right)^{n-1}\dfrac{C_{\varphi}+C_0T}{\sqrt{M}}\sum_{k=0}^{n-1}\dfrac{\sqrt{M}^kC_f^k(T-t)^k}{a^kk!}\left(\chi(k<n-1)\binom{n-2}{k}+\chi(k>0)\binom{n-2}{k-1}\right)\\
		\nonumber &\leq \left(\dfrac{a}{\sqrt{M}}\right)^{n-1}\dfrac{C_{\varphi}+C_0T}{\sqrt{M}}\sum_{k=0}^{n-1}\dfrac{\sqrt{M}^kC_f^k(T-t)^k}{a^kk!}\binom{n-1}{k}
		\end{align}
		
		Now setting $C_1=\max\{2a, C_{\varphi}+C_0T\}$ , from the Proposition \ref{prop3.3} the following inequality holds.
		
		\begin{eqnarray}\label{3.19}v_n(t)\leq 2^{n-1}\left(\dfrac{a}{\sqrt{M}}\right)^{n-1}\dfrac{C_{\varphi}+C_0T}{\sqrt{M}}e^{C_f\sqrt{M}(T-t)}\leq \left(\dfrac{C_1}{\sqrt{M}}\right)e^{C_f\sqrt{M}(T-t)} 
		\end{eqnarray}
		
		Let us define $\mu_{n}(t):=\sup_{x\in \mathbb{R}^d}\left[|(y-E^n[y_n])(t,x)|\right]$  then from the Assumption \ref{ass3} we have $\mu_0(t)\leq C_y$ and from (\ref{3.5}) we deduce that
		
		\begin{align}\label{3.20}
		\mu_n(t)&\leq \epsilon_n(t)+C_f \int\limits_{[t,T]}^{\sim}\mu_{n-1}(s)ds+C_f\int\limits_{[t,T]}^{\sim}v_{n-1}(s)ds\\ \nonumber &\leq \left(\epsilon_n(t)+C_f\int\limits_{[t,T]}^{\sim}\epsilon_{n-1}(s_1)ds_1\right)+\left(C_f\int\limits_{[t,T]}^{\sim}v_{n-1}(s)ds+C_f^2\int\limits_{[t,T]}^{\sim}\int\limits_{[s_1,T]}^{\sim}v_{n-2}(s_2)ds_2ds_1\right)+\\
		\nonumber &+ C_f^2\int\limits_{[t,T]}^{\sim}\int\limits_{[s_1,T]}^{\sim}\mu_{n-2}(s_2)ds_2ds_1\\
		\nonumber &\leq \left(\epsilon_n(t)+C_f\int\limits_{[t,T]}^{\sim}\epsilon_{n-1}(s_1)ds_1+\cdots+C_f^{n-1}\int\limits_{[t,T]}^{\sim}\int\limits_{[s_1,T]}^{\sim}\cdots \int\limits_{[s_{n-2},T]}^{\sim}\epsilon_{1}(s_{n-1})ds_{n-1}\cdots ds_1\right)+\\
		\nonumber &+\sum_{k=1}^n C_f^k\int\limits_{[t,T]}^{\sim}\int\limits_{[s_1,T]}^{\sim}\cdots \int\limits_{[s_{k-1},T]}^{\sim}v_{n-k}(s_{k})ds_{k}\cdots ds_1\\
		\nonumber &+C_f^n\int\limits_{[t,T]}^{\sim}\int\limits_{[s_1,T]}^{\sim}\cdots \int\limits_{[s_{n-1},T]}^{\sim}\mu_{0}(s_{n})ds_{n}\cdots ds_1
		\end{align}
		
		From (\ref{3.6}) and the Proposition \ref{prop3.1}-\ref{prop3.2}, the first sum on the right hand of (\ref{3.20}) satisfies the following inequality
		
		\begin{align}\label{3.21}\\
		 \nonumber \epsilon_n(t)&+C_f\int\limits_{[t,T]}^{\sim}\epsilon_{n-1}(s_1)ds_1+C_f^{n-1}\int\limits_{[t,T]}^{\sim}\int\limits_{[s_1,T]}^{\sim}\cdots \int\limits_{[s_{n-2},T]}^{\sim}\epsilon_{1}(s_{n-1})ds_{n-1}\cdots ds_1\\
		\nonumber&\leq C_d\dfrac{[Q!]^4}{[(2Q)!]^3}\sum_{k=1}^{n-1}\dfrac{C_f^{k-1}(T-t)^{2Q+k}}{(2Q+1)\cdots(2Q+k)}\\
		\nonumber&\leq C_d \dfrac{e^{\frac{1}{3}}\pi^{\frac{1}{3}}Q^{\frac{1}{2}}}{2}\left(\dfrac{e}{8Q}\right)^{2Q}\sum_{k=1}^{n-1}\dfrac{C_f^{k-1}(T-t)^{2Q+k}}{(2Q+1)\cdots(2Q+k)}\\
		\nonumber&\leq (n-1)Q^{\frac{1}{2}}C_2C_d\left(\dfrac{e}{8Q}\right)^{2Q}\\
		\nonumber&\leq nC_2C_dQ^{\frac{1}{2}}\left(\dfrac{e}{8Q}\right)^{2Q}		
		\end{align}
		\noindent where
		$$C_2=\dfrac{e^{\frac{1}{3}}\pi^{\frac{1}{2}}}{2}\sup_{k\in\mathbb{N}}\dfrac{C_f^{k-1}T^{2q+k}}{(2Q+1)\cdots(2Q+k)}<\infty.$$
		
		From the Taylors expansion of $e^x$  at 0, it holds that
		
		$$\forall n\in \mathbb{N},\exists \xi<x, e^x-\left(1+x+\dfrac{x^2}{2!}+\cdots+\dfrac{x^n}{n!}\right)=\dfrac{e^{\xi}x^{n+1}}{(n+1)!}\leq e^x\dfrac{x^{n+1}}{(n+1)!}$$
		
		From (\ref{3.19}) it holds that
		
		\begin{eqnarray*}
			C_f\int\limits_{[t,T]}^{\sim}v_{n-1}(s_1)ds_1&\leq& C_f\left(\dfrac{C_1}{\sqrt{M}}\right)^{n-1}\int\limits_{t}^Te^{C_f\sqrt{M}(T-s_1)}ds_1\\
			&=& C_f\left(\dfrac{C_1}{\sqrt{M}}\right)^{n-1}\dfrac{e^{C_f\sqrt{M}(T-t)}-1}{\sqrt{M}C_f}\\
			&=& \left(\dfrac{C_1}{\sqrt{M}}\right)^{n}\dfrac{e^{C_f\sqrt{M}(T-t)}-1}{C_1}\\
			&\leq& \left(\dfrac{C_1}{\sqrt{M}}\right)^{n}e^{C_f\sqrt{M}(T-t)}\dfrac{C_f\sqrt{M}(T-t)}{C_1}\\
		\end{eqnarray*}

		\begin{eqnarray*}
			C_f^2\int\limits_{[t,T]}^{\sim}\int\limits_{[s_1,T]}^{\sim}v_{n-2}(s_2)ds_2ds_1&\leq& C_f^2\left(\dfrac{C_1}{\sqrt{M}}\right)^{n-2}\int\limits_{[t,T]}^{\sim}\int\limits_{s_1}^Te^{C_f\sqrt{M}(T-s_2)}ds_2ds_1\\
			&=& C_f^2\left(\dfrac{C_1}{\sqrt{M}}\right)^{n-2}\left(\dfrac{e^{C_f\sqrt{M}(T-t)}-1}{(\sqrt{M}C_f)^2}-\dfrac{T-t}{\sqrt{M}C_f}\right)\\
			&=& \left(\dfrac{C_1}{\sqrt{M}}\right)^{n}\dfrac{e^{C_f\sqrt{M}(T-t)}-1-\sqrt{M}C_f(T-t)}{C_1^2}\\
			&\leq& \left(\dfrac{C_1}{\sqrt{M}}\right)^{n}\dfrac{e^{C_f\sqrt{M}(T-t)}(C_f\sqrt{M}(T-t))^2}{2C_1^2}\\
		\end{eqnarray*}
		
		In a similar way, one can easily check the following inequality by induction on $k\in \mathbb{N}$.		
		\begin{align}\label{3.22}\\
		\nonumber C_f^n\int\limits_{[t,T]}^{\sim}\int\limits_{[s_1,T]}^{\sim}\cdots\int\limits_{[s_{k-1},T]}^{\sim}v_{n-k}(s_k)ds_k\cdots ds_1\leq \left(\dfrac{C_1}{\sqrt{M}}\right)^{n}\dfrac{e^{C_f\sqrt{M}(T-t)}(C_f\sqrt{M}(T-t))^k}{k!C_1^k}
		\end{align}
		
		Summing up (\ref{3.22}) for $k=1,\ldots,n$, it holds that
		
		\begin{align}\label{3.23}\\
		\nonumber \sum_{k=1}^nC_f^n\int\limits_{[t,T]}^{\sim}\int\limits_{[s_1,T]}^{\sim}\cdots \int\limits_{[s_{k-1},T]}^{\sim}v_{n-k}(s_k)ds_k\cdots ds_1&\leq \left(\dfrac{C_1}{\sqrt{M}}\right)^{n}e^{C_f\sqrt{M}(T-t)}\sum_{k=1}^n{\dfrac{(C_f\sqrt{M}(T-t))^k}{k!C_1^k}}\\ \nonumber&\leq \left(\dfrac{C_1}{\sqrt{M}}\right)^{n}e^{C_f\sqrt{M}(T-t)\left(1+\frac{1}{C_1}\right)}
		\end{align}
		
		Likewise the last term of (\ref{3.20}) satisfies the following inequality.
		
		\begin{align}\label{3.24}\\\nonumber
		C_f^n\int\limits_{[t,T]}^{\sim}\int\limits_{[s_1,T]}^{\sim}\cdots\int\limits_{[s_{n-1},T]}^{\sim}\mu_0(s_n)ds_n\cdots ds_1 \leq \dfrac{C_y(T-t)^nC_f^n}{n!}
		\end{align}

		Substituting (\ref{3.21}),(\ref{3.23}),(\ref{3.24}) into (\ref{3.20}), it holds that
		
		\begin{align}\label{3.25}\\\nonumber
		\mu_n(t)\leq nC_2C_dQ^{\frac{1}{2}}\left(\dfrac{e}{8Q}\right)^{2Q}+\left(\dfrac{C_1}{\sqrt{M}}\right)^ne^{C_f\sqrt{M}(T-t)\left(1+\frac{1}{C_1}\right)}+\dfrac{C_y(T-t)^nC_f^n}{n!}
		\end{align}
		(\ref{3.19}) and (\ref{3.25}) proves the result of the theorem.
	\end{proof}
	\bigskip
	
	Note that the main contribution of the error estimates given by the Theorem \ref{the3.1} is from $\left(\dfrac{C_1}{\sqrt{M}}\right)^n$.\\
	Because the complexity of the scheme (\ref{2.4})-(\ref{2.7}) grows at about $d(MQ)^n$, we can see that the complexity grows polynomially in both dimension and error by choosing $M$ and $Q$ properly. (See \cite{1,3} for details)
	
	\section{Conclusion}
	In this paper we proposed a modified multi-level Picard iteration scheme for backward stochastic differential equation and presented an explicit error estimates of the scheme. The proposed scheme is very similar to the original multi-level Picard iteration scheme but it differs slightly and enables improvement in computational complexity. Our further interests will lie on the error estimates of the general case where the generator depends on control variate. Application of multi-level Picard approximation in other fields would also be our interests.

\end{document}